\documentclass[final,12pt]{amsart}
\usepackage{geometry}
\usepackage{graphicx}
\usepackage[colorlinks=true, citecolor=blue, linkcolor=blue]{hyperref}
\newgeometry{asymmetric, centering}

\usepackage{ifdraft}
\ifoptionfinal{
\usepackage[disable]{todonotes}
}{

\usepackage[notref, notcite]{showkeys}
\usepackage[bordercolor=white, color=white]{todonotes}
}

\makeatletter\providecommand\@dotsep{5}\def\listtodoname{List of Todos}\def\listoftodos{\hypersetup{linkcolor=black}\@starttoc{tdo}\listtodoname\hypersetup{linkcolor=blue}}\makeatother

\newtheorem{lemma}{Lemma}
\newtheorem{proposition}{Proposition}
\newtheorem{theorem}{Theorem}


\def\R{\mathbb R}

\def\N{\mathbb N}

\def\p{\partial}

\newcommand{\pair}[1]{\left\langle #1 \right\rangle}
\newcommand{\norm}[1]{\left\|#1 \right\|}

\newcommand{\tnorm}[1]{\vert\hspace{-0.3mm}\Vert#1\Vert\hspace{-0.3mm}\vert}

\title[Data assimilation method for the heat equation]{Fully discrete finite element data assimilation method for the heat equation}
\author[E. Burman]{Erik Burman}
\address{Department of Mathematics, University College London, 
Gower Street, London UK, WC1E 6BT.}
\email{e.burman@ucl.ac.uk}
\author[J. Ish-Horowicz]{Jonathan Ish-Horowicz} 
\address{MRC Biostatistics Unit,
University of Cambridge,
Cambridge Biomedical Campus,
Cambridge UK, CB2 0SR}
\email{jonathan.horowicz@mrc-bsu.cam.ac.uk}
\author[L. Oksanen]{Lauri Oksanen}
\address{Department of Mathematics, University College London, 
Gower Street, London UK, WC1E 6BT.}
\email{l.oksanen@ucl.ac.uk}

\date{\today}


\begin{document}

\begin{abstract}
We consider a finite element discretization for the reconstruction of the final state of the heat equation, 
when the initial data is unknown, but additional data is given in a sub domain in the space time. For the discretization in space we consider standard
continuous affine finite element approximation, and the time derivative
is discretized using a backward differentiation. We regularize the
discrete system by adding a penalty of the $H^1$-semi-norm of the
initial data, scaled with the mesh-parameter.
The analysis of the method uses techniques developed in E. Burman and L. Oksanen {\em Data assimilation for the heat equation using stabilized finite element methods
}, arXiv, 2016, combining discrete stability of the numerical method
with sharp Carleman estimates for the physical problem, to derive optimal error estimates for the
approximate solution. For the natural space time energy norm,
away from $t=0$, the convergence is the same as for the classical problem with known initial data,
but contrary to the classical case,
we do not obtain faster convergence for the $L^2$-norm at the final time. 
\end{abstract}
\maketitle


\section{Introduction}

Time discretization of parabolic problems, discretized in space using
finite element methods, is a well studied topic, see for example the
monograph by Thom\'ee \cite{Thom97}. The analysis for all such methods
relies on the satisfaction of the hypothesis of the Lions theorem
\cite{Lions61}, stating the existence, uniqueness
and stability properties of the problem. 

The classical problem can be cast in the abstract form, find $u\in V$
such that
\begin{align}\label{}
\label{eq:abstract_parabolic}
&(\partial_t u, v)_H + a(u,v) = \left<f,v\right>_{V',V},
\\\label{eq:inital_data}
&u(0) = u_0 \in H,
\end{align}
where $V,\,H$ are some Hilbert spaces,  with $V$ dense in $H$ and imbedded with continuous identity, 
$\left<\cdot,\cdot\right>_{V',V}$ denotes the
duality pairing between $V$ and its dual, and $a(u,v):V\times V \mapsto
\mathbb{R}$ a symmetric
bilinear form representing the weak form of a second order
differential operator.
A key ingredient of the theory is that the
spatial operator satisfies the the G\aa rding's inequality, there are $\alpha>0$ and $\beta \ge 0$ such that for all $v \in
V$ there holds
\begin{equation}\label{eq:gaarding}
a(v,v) \ge \alpha \|v\|_V^2 - \beta \|v\|_H^2.
\end{equation}

In many situations for instance in environmental science and
meteorology the initial data is not available, instead some other data in the space time domain have been collected through measurements. 
This leads to a data assimilation problem,
that is, a problem to incorporate the observations of the physical system into the state of a computational model of the system. 
Computations can not be based on the classical theory,
since the equation (\ref{eq:inital_data}) can not be enforced 
when $u_0$ is not known.

It is then an interesting problem in computational
mathematics what quantities can be approximated and what is the effect of measurement errors on such an
approximation. 
The approximation methods need to take in 
the account the fact that these data assimilation problems are ill-posed in the sense that a necessary condition for them to be solvable 
is that the observations indeed come from the system. In other words, it must be assumed apriori that the solution exists,
and the mathematical theory concerns only uniqueness and stability. 

In \cite{BO16}, we studied finite element methods for 
two data assimilation problems with unknown $u_0$. 
The two problems differ in the sense that 
the lateral boundary data for $u$ is either known or unknown.
In the first case (\ref{eq:gaarding})
holds, whereas unknown lateral boundary data leads to a failure of
\eqref{eq:gaarding}.
This again gives rise to very different stability properties.
When the lateral boundary is known, the data assimilation problem is Lipschitz stable in suitable spaces, 
but the optimal stability is of conditional H\"older type when 
no information is given on the lateral boundary. 
Here we restrict our attention to the case with known later boundary data, and 
extend the corresponding results of \cite{BO16} to a fully discrete method. 
In \cite{BO16} discretization only in space was
considered.

The fully discrete analysis does not reduce straightforwardly to the semi-discrete case, as demonstrated by the fact that,
in order to achieve the optimal convergence rate with respect to the size of the time step, an additional regularization term is needed, see Theorem \ref{th_main} below.
There we consider two different asymptotic rates, $\tau = \mathcal O(h)$
and $\tau = \mathcal O(h^2)$, between the size of the finite element mesh $h$ and the time step $\tau$, 
and the analysis under the less restrictive rate $\tau = \mathcal O(h)$ is valid only when additional regularization is present (the case $\gamma_1>0$ in the theorem). 
In Section \ref{sec_comp}, we give a computational example showing
that the additional regularization is necessary.

To keep the exposition simple, we assume that the physical system is modelled by the heat equation
\begin{equation}\label{heat}
\partial_t u -\Delta u = f \quad \mbox{ in }  (0,T) \times \Omega,
\end{equation}
with $u = 0$ on the boundary $\partial \Omega$.
Here $\Omega \subset \mathbb{R}^d$ is a connected polyhedral
domain. 
Of course, in the absence of additional information, 
the equation (\ref{heat}) does not have a unique solution.
We assume that measurements of $u$, denoted by $q$,  are available in the
space time domain $(0,T) \times \omega$, where $\omega$ is a non-empty, open subset of
$\Omega$. 
We want to solve (\ref{heat}) under the additional constraint that 
\begin{equation}\label{Mdata}
u = q \quad \mbox{ in } (0,T) \times \omega.
\end{equation}
It is known that if there exists a solution $u$ to the equations 
(\ref{heat}) and (\ref{Mdata}), then the solution is unique.

A convenient way of solving the problem (\ref{heat})-(\ref{Mdata}) is through optimization. Casting the
problem in a form where the distance to the measured data in some norm
is minimised under the constraint of the heat equation, lead to a 4DVAR type
method. Such methods are important in data assimilation for
meteorology and environmental science and we refer to \cite{QJ:QJ340,QJ:QJ49712051912,dimet1986variational} for some 
results in the applied sciences. Although these methods are widely
used and popular tools, there appears to be no rigorous numerical
analysis assessing discretisation errors for them. One objective of the present publication is to
start filling this gap.

We will now discuss the previous mathematical literature on the problem (\ref{heat})-(\ref{Mdata}). We focus on techniques that work in dimensions $1+d$ with $d > 1$, and refer to the paper \cite{Wang2010} and references therein for 
the $1+1$-dimensional case. 
Our finite element method builds on the 
stability estimate \cite{Emanuilov1995}, and in a wider context,
the literature on continuum stability estimates for parabolic data assimilation (or unique continuation) problems is reviewed in 
\cite{Isakov2006, Yamamoto2009}. 

Computational methods for the problem (\ref{heat})-(\ref{Mdata})
go back to \cite{Lattes1967} where the quasi-reversibility method was introduced. 
Variations of this method for parabolic problems 
were developed in \cite{Klibanov2006, Klibanov1990, Tadi2002}
and in \cite{Becache2015}, and we refer to \cite{Klibanov2013} for a review of the quasi-reversibility method outside the parabolic context. 
Although for example the papers \cite{Klibanov2006, Becache2015}
consider convergence with respect to a Tikhonov type regularization parameter, none of the above papers prove convergence rates with respect to the refinement of a discretization. 
Proving such a convergence rate is the main novelty 
of the present paper. Moreover, compared to the previous literature, an attractive feature of our
method is that no auxiliary
Tikhonov type regularization parameters need to be introduced, the only asymptotic parameters are the size of the finite element mesh in space and the size of the time step.

Both the quasi-reversibility method and our method are based on 
Carleman estimates for the continuous problem. 
An alternative approach is to derive Carleman estimates directly on the discrete level, see for example \cite{BHR11} where such an approach was used for the closely related null controllability problem for the heat equation. 

The approach in the present paper has grown out of the study of stabilized finite element methods for unique continuation problems for elliptic equations \cite{Bu13,Bu14, BHL16}.
Another line of research that appears to be converging to a similar optimization based approach originates from the numerical analysis of the exact controllability of the wave equation \cite{Castro2014,Cindea2013,Cindea2015}. The approach has been applied to stable unique continuation problems for the wave equation \cite{Cindea2015a,Cindea2016} and 
to the null controllability problem for the heat equation \cite{Muench2016}.
Drawing from this line of research, a numerical analysis of the data assimilation problem for the heat equation is in preparation \cite{Muench2016b}, based on the continuous mixed formulation \cite{Muench2016a}.

\section{Discrete optimization problem}

Following \cite{BO16}, we first discretize \eqref{heat} in space only. Let $\mathcal{T}_h$ be
a conforming triangulation of the polyhedral domain $\Omega$. Let $h_K = \mbox{diam}(K)$ be
the local mesh parameter and $h = \max_{K\in \mathcal{T}_h} h_K$ the
mesh size. We assume that the family of triangulations
$\{\mathcal{T}_h\}_h$ is quasi uniform in the sense that there exists
a constant $c_1$ such that for all $K \in \mathcal{T}_h$ 
it holds that $
h_K \leq h \leq c_1 h_K$. Let $V_h$ be the standard
space of piecewise affine continuous finite elements 
satisfying the zero boundary condition,
\[
V_h = \{v \in H^1_0(\Omega);\ v \vert_{K} \in \mathbb{P}_1(K), \;
\forall K \in \mathcal{T}_h \}.
\]
We may then write a semi-discrete finite element formulation of \eqref{heat} as follows, find
$u \in C^1(0,T; V_h)$ such that
\begin{equation}\label{FEM}
(\partial_t u , v)+ a(u,v) = (f,v),
\quad v \in V_h,
\end{equation}
where 
\[
(u,v) = \int_\Omega u v\, dx,
\quad 
a(u,v) =  \int_\Omega \nabla u\cdot \nabla v\, dx.
\]
The idea is then to minimize the distance to the data (\ref{Mdata}) under the
constraint of this dynamical system. 

In order to outline this idea,
let us consider the following preliminary Lagrangian functional,
\begin{equation}\label{Lagrange_space}
\mathcal{L}_0(u,z) := \frac12 \|u - q\|_{L^2((0,T) \times \omega)}^2 +
\int_0^T (\partial_t u , z)+ a(u,z) - (f,z) \, dt .
\end{equation}
Writing the Euler-Lagrange equations for $\mathcal{L}_0$
we arrive to the following problem, find $(u,z)$
such that
\begin{align*}
\left<\partial_{u} \mathcal{L}_0(u,z), v \right> &= \int_0^T
(\partial_t v , z)+ a(v,z) + (u -
  q,v)_\omega\, dt = 0,
\\
\left<\partial_{z} \mathcal{L}_0(u,z), w \right> &= \int_0^T
 (\partial_t u , w)+ a(u,w) - (f,w)
\, dt = 0
\end{align*}
for all $v,w$. 
Here $(\cdot, \cdot)_\omega$ is the inner product on $L^2(\omega)$.
Clearly, if $z=0$ and $u$ solves
\eqref{FEM} with $u\vert_{(0,T) \times \omega} = q$, then these equations are
satisfied, and hence they are consistent with the data assimilation problem
that we set. This leads to a first possible approach: discretize this system in time and find the stationary
points of the discrete system. A numerical analysis however shows that this approach is
unlikely to be successful as the term 
$(u - q,v)_\omega$ does not seem to give enough stability for the problem to converge, and indeed, our computational examples in Section \ref{sec_comp} verify this. 
Instead we add certain regularization terms in the fully discrete context that we will describe next.

Let $N \in \N$ and $\tau > 0$ satisfy $N \tau = T$,
and define $t_n = n \tau$.
Furthermore, define for $u = (u^n)_{n=0}^N \in V_h^{N+1}$,
$$
\p_\tau u^n = \frac{u^n- u^{n-1}} \tau, \quad n=1,\dots,N.
$$

Consider the Lagrangian $\mathcal L : V_h^{N+1} \times V_h^N \to \R$ defined by
\begin{align}\label{def_L}
\mathcal L(u,z) 
&= 
\frac 1 2 \gamma_M \tau \sum_{n=1}^N \norm{u^n - q^n}_\omega^2
+ \frac 1 2 \gamma_0 \norm{h \nabla u^0}^2
+ \frac 1 2 \gamma_1 \tau \sum_{n=1}^N \norm{\tau \nabla \p_\tau u^n}^2 
\\\notag&\quad
+ \tau \sum_{n=1}^N \left(
(\p_\tau u^n, z^n) + a(u^n, z^n) - (f^n, z^n)
\right),
\end{align}
where,
for fixed functions $f \in C(0,T; L^2(\Omega))$
and $q \in C(0,T; L^2(\omega))$,
$$
f^n = f(t_n), \quad q^n = q(t_n), \quad n=1,\dots,N.
$$
We make the standing assumption that the fixed constants
$\gamma_M, \gamma_0$ and $\gamma_1$ satisfy the following
\begin{align}\label{gamma_pos}
\gamma_M, \gamma_0 > 0
\quad \text{and} \quad 
\gamma_1 \ge 0. 
\end{align}

Defining the bilinear forms
\begin{align*}
A_1(u,w) &= \tau \sum_{n=1}^N \left( 
(\p_\tau u^n, w^n) + a(u^n, w^n)
\right),
\\
A_2((u,z),v) &=
\gamma_M \tau \sum_{n=1}^N (u^n, v^n)_\omega
+ \gamma_0 (h \nabla u^0, h \nabla v^0)
+ \gamma_1 \tau \sum_{n=1}^N (\tau \nabla \p_\tau u^n, \tau \nabla \p_\tau v^n)
\\&\quad
+ \tau \sum_{n=1}^N \left( 
(\p_\tau v^n, z^n) + a(v^n, z^n)
\right),
\end{align*}
the Euler-Lagrange equations for $\mathcal L$ are 
\begin{align}\label{normal}
A_1(u,w) = \tau \sum_{n=1}^N (f^n, w^n),
\quad 
A_2((u,z),v) = \gamma_M \tau \sum_{n=1}^N (q^n, v^n)_\omega.
\end{align}

We define the seminorms
\begin{align*}
\tnorm{u}_R^2
&= 
\gamma_M \tau \sum_{n=1}^N \norm{u^n}_\omega^2
+ \gamma_0 \norm{h \nabla u^0}^2
+ \gamma_1 \tau \sum_{n=1}^N \norm{\tau \nabla \p_\tau u^n}^2,
\\
\tnorm{u,z}_D^2
&=
\norm{z^1}^2 
+ \norm{z^N}^2 
+ \tau^2 \sum_{n=2}^N \norm{\p_\tau z^n}^2
+ \tau \sum_{n=1}^N \norm{\nabla z^n}^2
\\&\quad
+ \norm{h \nabla u^N}^2
+ h^2 \tau \sum_{n=1}^N \norm{\p_\tau u^n}^2
+ h^2 \sum_{n=1}^N \norm{\tau \nabla \p_\tau u^n}^2,
\\
\tnorm{v,w}_C^2
&= \tnorm{v}_R^2 + 
\tau \sum_{n=1}^N \norm{w^n}^2.
\end{align*}
Note that $\tnorm{\cdot}_D$ is, in fact, a norm on $V_h^{2N+1}$.
Also, if $\gamma_1 > 0$ then $\tnorm{\cdot}_R$ and $\tnorm{\cdot}_C$ are norms on $V_h^{N+1}$ and $V_h^{2N+1}$, respectively.
The system (\ref{normal}) has the following coercivity property.

\begin{proposition}
\label{prop_coer}
There is $C > 0$ such that for all 
$N \in \N$, $h > 0$ and 
$(u,z)$ in $V_h^{2N+1}$ there is $(v,w)$ in $V_h^{2N+1}$
satisfying
$$
\tnorm{u}_R^2 + \tnorm{u,z}_D^2
\le C\left( A_1(u, w)
+  A_2((u, z), v) \right) ,
\quad
\tnorm{v,w}_C \le C \tnorm{u}_R + C \tnorm{u,z}_D.
$$
\end{proposition}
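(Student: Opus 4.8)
The plan is to prove the statement by an explicit, $(N,h)$-independent construction of the test pair $(v,w)$, from which both inequalities can be read off. Write $B((u,z),(v,w)) := A_1(u,w) + A_2((u,z),v)$ and set $S := \tau \sum_{n=1}^N ( (\p_\tau u^n, z^n) + a(u^n, z^n) )$. The form $B$ is indefinite, so testing with $(u,z)$ itself will not work; instead I start from the combination $(v,w) = (u,-z)$. The indefinite coupling $S$ then appears in $A_2((u,z),u)$ and in $A_1(u,-z)$ with opposite signs and cancels, while the remaining terms of $A_2((u,z),u)$ reproduce exactly $\tnorm{u}_R^2$. This base choice gives $B = \tnorm{u}_R^2$ but controls neither $z$ nor any derivative of $u$, so I correct it by two perturbations carrying small parameters $\epsilon_1,\epsilon_2 > 0$.

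First I would add $\epsilon_2 h^2 \p_\tau u$ to $w$. Testing $A_1$ against $h^2 \p_\tau u$ yields $h^2 \tau \sum \norm{\p_\tau u^n}^2$ together with $h^2 \sum a(u^n, u^n - u^{n-1})$, and the polarization identity $a(u^n, u^n - u^{n-1}) = \tfrac12(\norm{\nabla u^n}^2 - \norm{\nabla u^{n-1}}^2 + \norm{\nabla(u^n - u^{n-1})}^2)$ telescopes this last sum into $\tfrac12\norm{h\nabla u^N}^2 + \tfrac12 h^2\sum\norm{\tau\nabla\p_\tau u^n}^2 - \tfrac12\norm{h\nabla u^0}^2$. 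These are precisely the three $u$-contributions to $\tnorm{u,z}_D^2$, and the single unwanted term $-\tfrac12\norm{h\nabla u^0}^2$ is absorbed into $\gamma_0\norm{h\nabla u^0}^2 \subset \tnorm{u}_R^2$. Next I would add $\epsilon_1 z$ to $v$ (keeping $v^0 = u^0$, so the perturbation vanishes at $n=0$). Feeding this into the $z$-coupling of $A_2$ and applying Abel summation to $\tau\sum(\p_\tau z^n, z^n)$ produces $\tfrac12\norm{z^1}^2 + \tfrac12\norm{z^N}^2 + \tfrac12\tau^2\sum\norm{\p_\tau z^n}^2$, while $\tau\sum a(z^n,z^n)$ gives $\tau\sum\norm{\nabla z^n}^2$ — exactly the $z$-part of $\tnorm{u,z}_D^2$.

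The perturbation in $v$ also creates the couplings $\epsilon_1\gamma_M\tau\sum(u^n,z^n)_\omega$ and $\epsilon_1\gamma_1\tau\sum(\tau\nabla\p_\tau u^n, \tau\nabla\p_\tau z^n)$, which must be absorbed. Splitting each by Young's inequality, the $u$-halves are absorbed into the $\gamma_M$- and $\gamma_1$-parts of $\tnorm{u}_R^2$, while the $z$-halves are bounded by $\tau\sum\norm{\nabla z^n}^2$ using the Poincar\'e inequality $\norm{z^n}_\omega \le C\norm{\nabla z^n}$ and, crucially, the exact identity $\tau\nabla\p_\tau z^n = \nabla z^n - \nabla z^{n-1}$, which gives $\tau\sum\norm{\tau\nabla\p_\tau z^n}^2 \le C\tau\sum\norm{\nabla z^n}^2$ with no inverse estimate. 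Fixing first the Young parameter, then $\epsilon_1$, then $\epsilon_2$ small (all independent of $N$ and $h$) yields $B \ge c(\tnorm{u}_R^2 + \tnorm{u,z}_D^2)$, which is the first asserted inequality.

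For the second inequality I would use the same estimates on the explicit $v = u + \epsilon_1 z$ and $w = -z + \epsilon_2 h^2 \p_\tau u$: the triangle inequality bounds $\tnorm{v}_R^2$ and $\tau\sum\norm{w^n}^2$ by $\tnorm{u}_R^2$ plus the $z$-energy and the $h$-weighted $u$-derivatives, each already controlled by $\tnorm{u,z}_D^2$ (again via $\tau\nabla\p_\tau z^n = \nabla z^n - \nabla z^{n-1}$, Poincar\'e, and boundedness of $h$). The main difficulty I anticipate is bookkeeping rather than conceptual: keeping track of the signs of the boundary terms $\norm{z^1}^2$, $\norm{z^N}^2$, $\norm{h\nabla u^N}^2$ and $\norm{h\nabla u^0}^2$ generated by the two discrete summations by parts, and arranging the hierarchy of small parameters so that every cross term is absorbed with an $(N,h)$-independent constant. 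The one genuinely delicate point is the $\gamma_1$-coupling: it must be dominated by $\tau\sum\norm{\nabla z^n}^2$ through the exact telescoping identity rather than through an inverse inequality, since the latter would introduce a spurious factor $\tau h^{-2}$ and break the uniformity in $h$ and $\tau$ claimed in the proposition.
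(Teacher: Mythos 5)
Your proposal is correct and follows essentially the same route as the paper's proof: the paper also takes $(v,w) = (u + \alpha \hat z,\, -z + \alpha h^2 \p_\tau u)$ with $\hat z^0 = 0$ (your two parameters $\epsilon_1, \epsilon_2$ are merged into a single $\alpha$ there), uses the same discrete integration-by-parts identity to produce the boundary and $\tau^2$-weighted terms, absorbs $-\tfrac{\alpha}{2}\norm{h\nabla u^0}^2$ into the $\gamma_0$-term, and handles the $\gamma_M$- and $\gamma_1$-cross terms exactly as you do, via Young, Poincar\'e, and the telescoping bound $\tau\sum_n \norm{\tau\nabla\p_\tau \hat z^n}^2 \le C\tau\sum_n\norm{\nabla z^n}^2$ without any inverse inequality. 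Your treatment of the second inequality likewise matches the paper's (which dismisses the $u$-terms as trivial, implicitly using $h \le \operatorname{diam}(\Omega)$ as you note).
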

\begin{proof}
We will show first that there is $\alpha > 0$
such that for all $(u,z) \in V_h^{2N+1}$
\def\z{\hat z}
\begin{align}\label{coer1}
\frac 1 2 \left( \tnorm{u}_R^2 + \alpha \tnorm{u,z}_D^2
\right) 
\le A_1(u, -z + \alpha h^2 \p_\tau u)
+ A_2((u, z), u + \alpha \z),
\end{align}
where 
$\p_\tau u = (\p_\tau u^n)_{n=1}^N \in V_h^{N}$
and 
$\z = (\z^n)_{n=0}^N \in V_h^{N+1}$
is defined by $\z^0 = 0$ and $\z^n = z^n$, $n=1,\dots,N$.
Observe that 
$$
\tnorm{u}_R^2 = A_1(u, -z)
+ A_2((u, z), u).
$$

The identity 
\begin{align}\label{disc_antid}
\tau \sum_{n=1}^N (\p_\tau u^n, u^n)
= \frac 1 2 \left( \norm{u^N}^2 - \norm{u^0}^2 \right)
+ \frac{\tau^2} 2 \sum_{n=1}^N \norm{\p_\tau u^n}^2
\end{align}
is the discrete analogue of 
$$
\int_0^T (\p_t u, u)\, dt = \frac 1 2 \left( \norm{u(T)}^2 - \norm{u(0)}^2 \right).$$
To derive (\ref{disc_antid}) we employ
the polarization identity 
\begin{align*}
\tau (\p_\tau u^n, u^n)
= \norm{u^n}^2 - (u^{n-1}, u^n)
= \norm{u^n}^2 - \frac 1 2 \left( \norm{u^{n}}^2 + \norm{u^{n-1}}^2 - \norm{u^n-u^{n-1}}^2  \right),
\end{align*}
and observe that there is a telescoping type cancellation.
Using the identity (\ref{disc_antid}) 
with the bilinear form $(\cdot, \cdot)$ replaced by $a(\cdot,\cdot)$,
we have
\begin{align*}
A_1(u, \p_\tau u) &= 
\tau \sum_{n=1}^N \left( 
\norm{\p_\tau u^n}^2 + a(u^n, \p_\tau u^n)
\right)
\\&= 
\tau \sum_{n=1}^N \norm{\p_\tau u^n}^2
+ \frac 1 2 \left( \norm{\nabla u^N}^2 - \norm{\nabla u^0}^2 \right)
+ \frac{\tau^2} 2 \sum_{n=1}^N \norm{\nabla \p_\tau u^n}^2.
\end{align*}
Observe that if $\alpha \le \gamma_0$
then $-\alpha h^2 \norm{\nabla u^0}^2/2$ is absorbed by $\tnorm{u}_R^2$.

We have 
\begin{align*}
A_2((u, z), \z)
&=
\gamma_M \tau \sum_{n=1}^N (u^n, z^n)_\omega
+ \gamma_1 \tau \sum_{n=1}^N (\tau \nabla \p_\tau u^n, \tau \nabla \p_\tau \z^n)
\\&\quad
+ \tau \sum_{n=1}^N \left( 
(\p_\tau \z^n, z^n) + \norm{\nabla z^n}^2
\right).
\end{align*}
The identity (\ref{disc_antid}) gives
\begin{align*}
\tau \sum_{n=1}^N (\p_\tau \z^n, z^n)
= 
\frac 1 2 \norm{z^N}^2 
+ \frac{\tau^2} 2 \sum_{n=1}^N \norm{\p_\tau \z^n}^2
= 
\frac 1 2 \norm{z^N}^2 
+ \frac 1 2 \norm{z^1}^2 
+ \frac{\tau^2} 2 \sum_{n=2}^N \norm{\p_\tau z^n}^2.
\end{align*}
Let us now consider the cross terms. The Poincar\'e inequality gives
\begin{align*}
(u^n, z^n)_\omega \le (4\delta)^{-1} \norm{u^n}_\omega^2 + C \delta \norm{\nabla z^n}^2,
\end{align*}
and the second term can be absorbed by $\norm{\nabla z^n}^2$
for small $\delta > 0$.
The first term is absorbed by $\tnorm{u}_R^2$ for small $\alpha > 0$. For the second cross term, 
\begin{align*}
\tau \sum_{n=1}^N (\tau \nabla \p_\tau u^n, \tau \nabla \p_\tau \z^n)
\le (2\delta)^{-1} \tau \sum_{n=1}^N \norm{\tau \nabla \p_\tau u^n}^2 + \delta \tau \sum_{n=1}^N \norm{\nabla z^n}^2
\end{align*}
and we see that these two terms are absorbed analogously with the above. 
This finishes the proof of (\ref{coer1}).

It remains to show that 
$$
\tnorm{v,w}_C \le C \tnorm{u}_R + C \tnorm{u,z}_D.
$$
when $v = u + \alpha \z$ and $w = -z + \alpha h^2 \p_\tau u$.
We have
\begin{align*}
\tnorm{\z}_R^2
&= 
\gamma_M \tau \sum_{n=1}^N \norm{z^n}_\omega^2
+ \gamma_1 \tau \sum_{n=1}^N \norm{\tau \nabla \p_\tau \z^n}^2
\le C \tau \sum_{n=1}^N \norm{\nabla z^n}^2
\le C \tnorm{0,z}_D^2,
\end{align*}
where the Poincar\'e inequality and the triangle inequality was used for the first and the second term, respectively.
Using the Poincar\'e inequality again, we have
$$
\tau \sum_{n=1}^N \norm{z^n}^2 \le C \tnorm{0,z}_D^2.
$$
The bounds for the terms containing $u$ are trivial.
\end{proof}

Denote by $N_h$ the dimension of $V_h$.
The equations (\ref{normal}) define a square linear system
of $(2N+1)N_h$ unknowns, and taking 
$f^n = 0$ and $q^n = 0$, $n=1,\dots,N$,
it follows from Proposition \ref{prop_coer} that 
$(u,z) = 0$ is the only solution of the corresponding homogeneous system. 
Thus (\ref{normal}) has a unique solution.

\section{A priori error estimates}

\begin{proposition}
\label{prop_tnorm}
Suppose that $\Omega$ is a convex polyhedral domain and that $u$ is in 
\begin{align}
\label{star_space}
H^1(0,T; H^1_0(\Omega))
\cap H^2(0,T;L^2(\Omega)).
\end{align}
Denote by $\norm{\cdot}_*$ the norm in (\ref{star_space}).
Let $(u_h,z_h) \in V_h^{2N+1}$
be the solution of (\ref{normal})
with $f = \p_t u - \Delta u$ and $q = u|_{(0,T) \times \omega}$, and 
suppose that $f \in C(0,T;L^2(\Omega))$.
Then
\begin{align*}
&\tnorm{\pi_h u - u_h}_R + \tnorm{\pi_h u - u_h, z_h}_D 
\le C (h+\tau) \norm{u}_*,
\end{align*}
where $\pi_h u$ is the orthogonal projection defined by 
\begin{align}\label{pi_ortho}
a(\pi_h u, w) = a(u,w), \quad w \in V_h.
\end{align}
\end{proposition}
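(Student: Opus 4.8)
The plan is to run a Galerkin-type argument on top of the coercivity of Proposition~\ref{prop_coer}, treating the nodal values of the exact solution $u$ together with the vanishing dual state as an approximate solution of the normal equations (\ref{normal}), and then estimating the consistency residuals. First I would split the error through the Ritz projection (\ref{pi_ortho}), writing the primal error as $\eta + e_u$ with $\eta^n = u(t_n) - \pi_h u(t_n)$ and $e_u = \pi_h u - u_h$, and setting the dual error $e_z = -z_h$. Substituting $f = \p_t u - \Delta u$ and $q = u|_{(0,T)\times\omega}$ into (\ref{normal}), and using the weak identity $a(u(t_n), w) = -(\Delta u(t_n), w)$ for $w \in V_h$ --- legitimate because $\Delta u = \p_t u - f \in C(0,T;L^2(\Omega))$ --- I obtain after subtraction the error equations
\begin{align*}
A_1(e_u, w) &= R_1(w) - A_1(\eta, w),
\\
A_2((e_u, e_z), v) &= R_2(v) - A_2((\eta, 0), v),
\end{align*}
for all $(v,w) \in V_h^{2N+1}$, where $R_1(w) = \tau \sum_{n=1}^N (\p_\tau u(t_n) - \p_t u(t_n), w^n)$ is the backward-difference consistency error and $R_2(v) = \gamma_0 (h \nabla u(0), h \nabla v^0) + \gamma_1 \tau \sum_{n=1}^N (\tau \nabla \p_\tau u(t_n), \tau \nabla \p_\tau v^n)$ collects the regularization terms evaluated at $u$. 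The decisive simplifications are that the Ritz projection annihilates $a(\eta^n, w^n)$ and that the exact dual state is zero, so no adjoint residual survives on the right-hand side.

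Next I would apply Proposition~\ref{prop_coer} to the pair $(e_u, e_z)$, producing a test pair $(v,w)$ with $\tnorm{v,w}_C \le C(\tnorm{e_u}_R + \tnorm{e_u, z_h}_D)$ and, since $\tnorm{\cdot,\cdot}_D$ is even in its second slot,
\[
\tnorm{e_u}_R^2 + \tnorm{e_u, z_h}_D^2
\le C\left( R_1(w) - A_1(\eta, w) + R_2(v) - A_2((\eta, 0), v) \right).
\]
It then remains to bound each of the four terms on the right by $C(h+\tau) \norm{u}_* \tnorm{v,w}_C$, after which Young's inequality and division close the estimate.

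The step I expect to be most delicate is precisely this bounding, because $\tnorm{v,w}_C$ controls only a thin set of quantities --- namely $(\tau \sum_n \norm{v^n}_\omega^2)^{1/2}$, $\norm{h \nabla v^0}$, $(\gamma_1 \tau \sum_n \norm{\tau \nabla \p_\tau v^n}^2)^{1/2}$ and $(\tau \sum_n \norm{w^n}^2)^{1/2}$ --- so each residual term must be paired by Cauchy--Schwarz against exactly its matching factor. Thus $R_1(w)$ and $A_1(\eta, w) = \tau \sum_n (\p_\tau \eta^n, w^n)$ (the Laplacian having cancelled via the Ritz projection) pair against the $w$-factor, while the $\gamma_M$, $\gamma_0$ and $\gamma_1$ pieces of $A_2((\eta, 0), v)$ and of $R_2(v)$ pair, respectively, against the $v$-factors in $\omega$, at $t=0$, and in the $\gamma_1$-weighted slot; the last pairing carries the factor $\gamma_1$, so these terms are automatically absent when $\gamma_1 = 0$. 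The corresponding $u$-factors are estimated by the $O(\tau)$ Taylor bound $(\tau \sum_n \norm{\p_\tau u(t_n) - \p_t u(t_n)}^2)^{1/2} \le \tau \norm{\p_t^2 u}_{L^2(0,T;L^2)}$ for the time consistency error, and, for the projection terms, by the $L^2$ Ritz estimate $\norm{(I - \pi_h) v} \le C h \norm{\nabla v}$ applied to $v = u(t_n)$ and, after commuting $\pi_h$ with $\p_\tau$ and passing to the integral mean over $[t_{n-1}, t_n]$, to $v = \p_t u$. It is here that the convexity of $\Omega$ enters, since this $L^2$ Ritz estimate rests on the $H^2$ elliptic regularity used in the Aubin--Nitsche duality argument. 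Using $\nabla u \in C(0,T;L^2)$ together with $\nabla \p_t u, \p_t^2 u \in L^2(0,T;L^2)$, all controlled by $\norm{u}_*$, every $u$-factor is bounded by $C(h+\tau)\norm{u}_*$, which yields $\tnorm{e_u}_R^2 + \tnorm{e_u, z_h}_D^2 \le C(h+\tau)\norm{u}_* (\tnorm{e_u}_R + \tnorm{e_u, z_h}_D)$ and hence the claim.
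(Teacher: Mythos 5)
Your proposal is correct and follows essentially the same route as the paper: split the error through the Ritz projection, use the pointwise consistency of $u(t_n)$, apply Proposition~\ref{prop_coer}, and bound the residuals via the Taylor estimate in time and the $L^2$/$H^1$ estimates for $\pi_h$ (convexity entering through the Aubin--Nitsche duality), exactly as in the paper's treatment of the terms $I_1$--$I_5$. Your explicit choice $e_z = -z_h$ even makes transparent a sign that the paper leaves implicit in its displayed formula for $A_2((\xi_h,z_h),v)$, and it is harmless since $\tnorm{\cdot,\cdot}_D$ does not see the sign of the dual component.
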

\begin{proof}
We use the shorthand notation $\xi_h = \pi_h u - u_h$. 
By Proposition \ref{prop_coer} it is enough to show that 
$$
A_1(\xi_h, w) + A_2((\xi_h, z_h), v)
\le C (h + \tau) \tnorm{v,w}_C \norm{u}_*, \quad (v,w) \in V_h^{2N+1}.
$$

The point values $u^n = u(t_n)$ satisfy
$$
(\p_t u^n , \phi) + a(u^n , \phi) = (f^n, \phi), \quad n = 1,\dots,N,\ \phi \in H^1_0(\Omega).
$$
This implies the following consistency relation
\begin{align*}
A_1(u - u_h, w)
&=  \tau \sum_{n=1}^N \left( 
(\p_\tau u^n , w^n) + a(u^n , w^n)
\right)
- \tau \sum_{n=1}^N (f^n, w^n)
\\&=  \tau \sum_{n=1}^N (\p_\tau u^n - \p_t u^n, w^n).
\end{align*}
Using also the orthogonality (\ref{pi_ortho}), we get
\begin{align*}
A_1(\xi_h, w) 
&= 
A_1(\pi_h u - u, w) + A_1(u - u_h, w)
\\\notag&=
\tau \sum_{n=1}^N ((\pi_h - 1)\p_\tau u^n, w^n)
+ \tau \sum_{n=1}^N (\p_\tau u^n - \p_t u^n, w^n).
\end{align*}
The Cauchy-Schwarz inequality implies that
$A_1(\xi_h, w) \le 2 (I_1 + I_2)^{1/2} \tnorm{0,w}_C$ where
\begin{align*}
I_1 = \tau \sum_{n=1}^N \norm{(\pi_h - 1)\p_\tau u^n}^2,
\quad I_2 = \tau \sum_{n=1}^N \norm{\p_\tau u^n - \p_t u^n}^2.
\end{align*}
We estimate $I_1$ by using the approximation properties of $\pi_h$,
see e.g. \cite[Th. 3.16 and 3.18]{Ern2004},
\begin{align*}
I_1
&= 
\tau^{-1} \sum_{n=1}^N
\norm{\int_{t_{n-1}}^{t_n} (\pi_h - 1) \p_t u\, dt}^2
\le
\sum_{n=1}^N
\int_{t_{n-1}}^{t_n} \norm{(\pi_h - 1) \p_t u\, dt}^2
\\&\le 
 C h^{2} \int_0^T \norm{\nabla \p_t u}^2\, dt.
\end{align*}
For $I_2$
we use Taylor's theorem with the integral form of the remainder,
\begin{align*}
I_2
&= \tau^{-1} \sum_{n=1}^N
\norm{\int_{t_{n-1}}^{t_n} \frac {t_n - t} 2\, \p_t^2 u \, dt }^2
\le
\tau^{-1} \sum_{n=1}^N 
\int_{t_{n-1}}^{t_n} (t_n - t)^2\, dt 
\int_{t_{n-1}}^{t_n} \norm{\p_t^2 u}^2 dt
\\&\le
\tau^2 \int_0^T \norm{\p_t^2 u}^2 \, dt.
\end{align*}

Let us now turn to the second bilinear form. We have 
\begin{align*}
A_2((\xi_h,z_h),v) 
&=
\gamma_M \tau \sum_{n=0}^N (\pi_h u^n - u^n, v^n)_\omega
+ \gamma_0 (h \nabla \pi_h u^0, h \nabla v^0)
\\&\quad
+ \gamma_1 \tau \sum_{n=1}^N (\tau \nabla \p_\tau \pi_h u^n, \tau \nabla \p_\tau v^n).
\end{align*}
Thus
$A_2((\xi_h,z_h),v) \le C (I_3 + I_4 + I_5)^{1/2} \tnorm{v,0}_C$,
where
\begin{align}
I_3 &= \tau \sum_{n=0}^N \norm{\pi_h u^n - u^n}_\omega^2
\le h^{2} \tau \sum_{n=0}^N \norm{\nabla u^n}^2
\le C h^{2} \norm{\nabla u}_{H^1(0,T;L^2(\Omega))}^2, \nonumber
\\
I_4 &=
\norm{h \nabla \pi_h u^0}^2
\le C h^2 \norm{\nabla u}_{H^1(0,T;L^2(\Omega))}^2, \nonumber
\\
I_5 &= \tau \sum_{n=1}^N \norm{\nabla \pi_h \tau \p_\tau  u^n}^2
= \tau \sum_{n=1}^N \norm{\int_{t_{n-1}}^{t_n} \nabla \pi_h \p_t  u\, dt}^2
\le \tau^2 \int_0^T \norm{\nabla \p_t u}^2 dt. \label{eq:useful}
\end{align}
Here we used the trace inequality in time and the continuity of $\pi_h$.
\end{proof}

We recall the following variation of \cite{Emanuilov1995} that was proven 
in \cite{BO16}.

\begin{theorem}
\label{th_cont_stable}
Let $\Omega \subset \R^d$ be a convex polyhedron,
let $\omega \subset \Omega$ be open and non-empty, and let $0 < \delta < T$. 
Then there is $C > 0$ such that for all $u$ in the space
\begin{align}
\label{energy_space}
H^1(0,T; H^{-1}(\Omega)) \cap L^2(0,T; H_0^1(\Omega)),
\end{align}
it holds that 
\begin{align*}
&\norm{u}_\delta \le C (\norm{u}_{L^2((0, T) \times \omega)} + 
\norm{\p_t u - \Delta u}_{L^2(0, T; H^{-1}(\Omega))}),
\end{align*}
where $\norm{\cdot}_\delta$ is the norm in 
$C(\delta, T; L^2(\Omega)) \cap L^2(\delta, T; H^1(\Omega)) \cap H^1(\delta, T; H^{-1}(\Omega))$.
\end{theorem}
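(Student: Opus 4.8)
The plan is to deduce the estimate from a global Carleman inequality for the heat operator $P = \p_t - \Delta$, in the spirit of \cite{Emanuilov1995}. First I would fix a slightly smaller subdomain $\omega_0$ with $\overline{\omega_0} \subset \omega$ and construct a weight $\psi \in C^2(\overline\Omega)$ satisfying $\psi > 0$ in $\Omega$, $\psi = 0$ on $\p\Omega$, and $|\nabla \psi| > 0$ on $\overline\Omega \setminus \omega_0$; here the convexity of $\Omega$ is what makes the boundary contributions arising in the integration by parts have a favourable sign, so that no information on $\p_\nu u$ at $\p\Omega$ is required. With $\theta(t) = (t(T-t))^{-1}$ I would then set $\varphi = e^{\lambda \psi}\theta$ and $\alpha = (e^{2\lambda M} - e^{\lambda\psi})\theta$, where $M = \norm{\psi}_{L^\infty}$, and invoke the standard Carleman estimate: for $\lambda$ and $s$ large enough, a weighted norm of $u$ and its first derivatives, with weight $e^{-2s\alpha}$, is controlled by $\norm{e^{-s\alpha} Pu}^2$ plus a term localized in $(0,T)\times\omega$.

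The main point is then to remove the weight away from $t = 0$. Since $\alpha$ is bounded above and below by positive constants on any compact subinterval $[t_1,t_2] \subset (0,T)$, restricting the left-hand side to such a slab and bounding the weight on the right by its extrema yields a clean estimate
\[
\norm{u}_{L^2(t_1,t_2;H^1(\Omega))} + \norm{u}_{H^1(t_1,t_2;H^{-1}(\Omega))} \le C\left( \norm{Pu}_{L^2(0,T;H^{-1}(\Omega))} + \norm{u}_{L^2((0,T)\times\omega)} \right).
\]
Fixing some $t_1 \in (0,\delta)$ and using that the energy space \eqref{energy_space} embeds continuously into $C([t_1,t_2];L^2(\Omega))$, this in particular controls $\norm{u(t_1)}$ by the right-hand side of the theorem.

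To reach the final time, which the Carleman weight cannot because of its singularity at $t = T$, I would propagate forward with the classical a priori estimate for the heat equation on $(t_1,T)$: testing the identity $\p_t u - \Delta u = Pu$ against $u$ gives, for every $t \in [t_1,T]$,
\[
\norm{u(t)}^2 + \int_{t_1}^t \norm{\nabla u}^2 \, ds \le C\left( \norm{u(t_1)}^2 + \norm{Pu}_{L^2(t_1,T;H^{-1}(\Omega))}^2 \right),
\]
which bounds the $C(\delta,T;L^2(\Omega)) \cap L^2(\delta,T;H^1(\Omega))$ part of $\norm{u}_\delta$. The remaining $H^1(\delta,T;H^{-1}(\Omega))$ part follows directly from the equation, since $\p_t u = \Delta u + Pu$ gives $\norm{\p_t u}_{H^{-1}} \le \norm{\nabla u} + \norm{Pu}_{H^{-1}}$. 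Collecting the three contributions yields the asserted bound.

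Two steps I expect to be delicate. The first is that $Pu$ is only assumed in $L^2(0,T;H^{-1}(\Omega))$, whereas the Carleman estimate is most naturally stated for an $L^2$-in-space right-hand side; bridging this gap is exactly the variation of \cite{Emanuilov1995} recorded in \cite{BO16}, and I would handle it either by proving the Carleman inequality directly in the $H^{-1}$ norm or by splitting $Pu = g + \nabla \cdot G$ and estimating each piece separately. The second is regularity: the Carleman computation requires enough smoothness to integrate by parts, so I would first establish the inequality for smooth $u$ and then pass to general $u$ in the energy space \eqref{energy_space} by a density argument, checking that every term is continuous in the energy-space topology.
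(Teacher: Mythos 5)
First, a point of comparison: the paper itself contains no proof of Theorem \ref{th_cont_stable} --- it is recalled as ``a variation of \cite{Emanuilov1995} that was proven in \cite{BO16}'' --- so your attempt must be measured against that cited argument. Your overall architecture is the right one and matches it in structure: a global Carleman estimate for $\p_t - \Delta$ with homogeneous Dirichlet data, removal of the weight on a compact slab $[t_1,t_2] \subset (0,T)$ to control $\norm{u(t_1)}_{L^2(\Omega)}$, a forward energy estimate on $(t_1,T)$ to reach the final time $T$ (where the Carleman weight degenerates), and recovery of the $H^1(\delta,T;H^{-1}(\Omega))$ component directly from the equation via $\norm{\p_t u}_{H^{-1}} \le \norm{\nabla u} + \norm{Pu}_{H^{-1}}$. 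These steps, including the energy inequality obtained by testing with $u$, are correct as stated.

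There are, however, two concrete problems. First, your weight construction is internally inconsistent on a polyhedron: if $\psi \in C^2(\overline\Omega)$ vanishes on $\p\Omega$, then at any edge or corner $\nabla\psi$ is orthogonal to the tangent spaces of both adjacent faces, which together span $\R^d$, so $\nabla \psi = 0$ there; since edges lie in $\overline\Omega \setminus \omega_0$, the condition $|\nabla\psi| > 0$ on $\overline\Omega \setminus \omega_0$ cannot hold. Moreover, with $\psi = 0$ on $\p\Omega$ the favourable boundary sign $\p_\nu \psi \le 0$ is automatic and convexity would play no role, contradicting your own explanation of the hypothesis. The actual function of convexity is to allow one to \emph{drop} the condition $\psi|_{\p\Omega} = 0$ and take the simple weight $\psi(x) = C - |x-x_0|^2$ with $x_0 \in \omega$: star-shapedness with respect to $x_0$ gives $(x-x_0)\cdot\nu \ge 0$ on $\p\Omega$, hence $\p_\nu\psi \le 0$, while $|\nabla\psi| > 0$ away from $x_0$; convexity additionally supplies the $H^2$-regularity of the Dirichlet Laplacian on the polyhedron needed in the regularity/density step. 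Second, on the $H^{-1}$ source term --- which you correctly flag as the crux of the variation --- the two bridges you propose (an $H^{-1}$ Carleman estimate, or the splitting $Pu = g + \nabla\cdot G$) are viable but substantially heavier than what is done in \cite{BO16}: there one subtracts the solution $w$ of the well-posed problem $\p_t w - \Delta w = Pu$, $w(0) = 0$, $w|_{\p\Omega} = 0$, which energy estimates control by $\norm{Pu}_{L^2(0,T;H^{-1}(\Omega))}$, so that $v = u - w$ solves the \emph{homogeneous} heat equation. The $L^2$-based Carleman estimate then applies to $v$ with no source at all, and parabolic smoothing of $v$ for $t>0$ makes your second flagged difficulty (justifying the integrations by parts, density) essentially disappear. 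So your proposal is a correct skeleton with the two hard points left open, one of them resting on a weight that does not exist on the domains covered by the theorem.
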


For $u_h = (u_h^n)_{n=0}^N \in V_h^{2N+1}$
we define the linear interpolation
\begin{align}\label{interpolation}
\tilde u_h(t) = \tau^{-1} \left(
(t-t_{n-1}) u^n_h + (t_{n}-t) u_h^{n-1}
\right), 
\quad t \in [t_{n-1},t_n],\ n = 1,\dots,N.
\end{align}
Observe that $\tilde u_h$ is in the space (\ref{energy_space})
and also in $C(0,T;H^1_0(\Omega))$.
We are now ready to prove our main result on the convergence of the stabilized finite element method. 

\begin{theorem}
\label{th_main}
Let $\omega \subset \Omega \subset \R^d$ and $\delta > 0$ 
be as in Theorem \ref{th_cont_stable}.
Let $u$, $f$ and $(u_h, z_h)$ be as in Proposition \ref{prop_tnorm}
and define $\tilde u_h$ by (\ref{interpolation}).
Suppose that $f \in H^1(0,T; L^2(\Omega))$.
Furthermore, in the case $\gamma_1 > 0$ suppose that 
$\tau = \mathcal O(h)$, and in the case $\gamma_1 = 0$
suppose that $\tau = \mathcal O(h^2)$.
Then 
$$
\norm{u - \tilde u_h}_\delta \le C h 
\left( 
\norm{u}_* + \norm{f}_{H^1(0,T; L^2(\Omega))}
\right).
$$
\end{theorem}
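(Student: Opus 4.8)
The plan is to apply the continuous conditional stability estimate of Theorem \ref{th_cont_stable} to the error $u - \tilde u_h$, which lies in the energy space \eqref{energy_space}, and then to bound the resulting data-misfit and residual terms by the discrete quantities $\tnorm{\xi_h}_R + \tnorm{\xi_h, z_h}_D$ controlled in Proposition \ref{prop_tnorm}, where $\xi_h = \pi_h u - u_h$. Thus the first step gives
\begin{align*}
\norm{u - \tilde u_h}_\delta
\le C \norm{u - \tilde u_h}_{L^2((0,T) \times \omega)}
+ C \norm{\p_t(u - \tilde u_h) - \Delta(u - \tilde u_h)}_{L^2(0,T; H^{-1}(\Omega))},
\end{align*}
and it remains to estimate the two terms on the right by $Ch(\norm{u}_* + \norm{f}_{H^1(0,T;L^2(\Omega))})$.

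For the data-misfit term I would split $u - \tilde u_h = (u - \widetilde{\pi_h u}) + \tilde\xi_h$, where $\widetilde{\pi_h u}$ and $\tilde\xi_h$ denote the piecewise linear time interpolants \eqref{interpolation} of the nodal values $\pi_h u(t_n)$ and $\xi_h^n$. The approximation part is $\mathcal O(h + \tau)\norm{u}_*$ by the $L^2$-approximation properties of $\pi_h$ and the standard temporal interpolation error, while $\tilde\xi_h$ restricted to $\omega$ is controlled through $\tau\sum_n\norm{\xi_h^n}_\omega^2 \le \gamma_M^{-1}\tnorm{\xi_h}_R^2$. The only subtlety is the contribution of the first interval $[0,\tau]$, where $\xi_h^0$ appears but is not directly present in $\tnorm{\cdot}_R$; I would absorb it by writing $\xi_h^0 = \xi_h^1 - \tau\p_\tau\xi_h^1$ and using the term $h^2\tau\sum_n\norm{\p_\tau\xi_h^n}^2$ of $\tnorm{\xi_h,z_h}_D^2$ together with $\tau = \mathcal O(h)$.

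The heart of the proof is the residual. Testing against $\phi\in H^1_0(\Omega)$ and using $\p_t u - \Delta u = f$, on $(t_{n-1},t_n)$ one finds
\begin{align*}
\pair{\p_t(u-\tilde u_h) - \Delta(u-\tilde u_h), \phi}
= (f,\phi) - (\p_\tau u_h^n,\phi) - a(u_h^n,\phi) - (t - t_n)a(\p_\tau u_h^n,\phi).
\end{align*}
The decisive idea is to test the discrete state equation (the first relation in \eqref{normal}, which holds per time step for all test functions in $V_h$) with the elliptic projection $\pi_h\phi$: since $u_h^n\in V_h$, the orthogonality \eqref{pi_ortho} gives $a(u_h^n,\phi - \pi_h\phi)=0$, so the spatial Galerkin residual cancels exactly and no element or jump terms survive. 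What remains is
\begin{align*}
(f - f^n,\phi) + (f^n,\phi - \pi_h\phi) - (\p_\tau u_h^n, \phi - \pi_h\phi) - (t-t_n)a(\p_\tau u_h^n,\phi).
\end{align*}
The first term is the temporal consistency of $f$, bounded by $\mathcal O(\tau)\norm{f}_{H^1(0,T;L^2(\Omega))}$; the second and third use $\norm{\phi - \pi_h\phi}\le Ch\norm{\nabla\phi}$ (Aubin--Nitsche, valid since $\Omega$ is convex) together with $\tau\sum_n\norm{\p_\tau u_h^n}^2$, which after splitting $u_h=\pi_h u - \xi_h$ is controlled by $\norm{\p_t u}_{L^2}^2$ and by $h^{-2}\tnorm{\xi_h,z_h}_D^2$.

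The hard part, and the reason the theorem splits into two regimes, is the last term $(t-t_n)a(\p_\tau u_h^n,\phi)$, whose $L^2(0,T;H^{-1})$-norm squared is $\tfrac13\tau\sum_n\norm{\tau\nabla\p_\tau u_h^n}^2$. Writing $u_h = \pi_h u - \xi_h$, the smooth part is $\mathcal O(\tau^2)\norm{u}_*^2$ as in estimate \eqref{eq:useful}, but the part involving $\xi_h$, namely $\tau\sum_n\norm{\tau\nabla\p_\tau\xi_h^n}^2$, is not controlled by $\tnorm{\xi_h,z_h}_D$ alone. When $\gamma_1>0$ it is bounded directly by $\gamma_1^{-1}\tnorm{\xi_h}_R^2$, which with $\tau=\mathcal O(h)$ yields $\mathcal O(h^2)\norm{u}_*^2$; when $\gamma_1=0$ one instead writes it as $\tfrac{\tau}{h^2}\,h^2\sum_n\norm{\tau\nabla\p_\tau\xi_h^n}^2\le\tfrac{\tau}{h^2}\tnorm{\xi_h,z_h}_D^2$, which is $\mathcal O(h^2)\norm{u}_*^2$ only under the stronger assumption $\tau=\mathcal O(h^2)$. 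Collecting all contributions and taking square roots gives the claimed $\mathcal O(h)$ bound. I expect this balancing of the time-derivative regularization against the $h^{-2}$-weighted control in $\tnorm{\cdot}_D$ to be the main obstacle, since it is precisely what dictates the admissible relation between $\tau$ and $h$.
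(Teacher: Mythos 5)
Your proposal is correct and follows essentially the same route as the paper: reduction via Theorem \ref{th_cont_stable} to the two bounds \eqref{e_omega} and \eqref{r}, cancellation of the spatial Galerkin residual through the orthogonality \eqref{pi_ortho} combined with the per-time-step discrete equation, control of the $\p_\tau u_h^n$ terms via $h^2\tau\sum_n\norm{\p_\tau u_h^n}^2\le\tnorm{u_h-\pi_h u,0}_D^2+h^2\norm{u}_*^2$, and the identical regime-splitting of the critical term $\tau\sum_n\norm{\tau\nabla\p_\tau(u_h^n-\pi_h u^n)}^2$ between $\gamma_1^{-1}\tnorm{\cdot}_R^2$ (when $\gamma_1>0$, $\tau=\mathcal O(h)$) and $(\tau/h^2)\tnorm{\cdot}_D^2$ (when $\gamma_1=0$, $\tau=\mathcal O(h^2)$). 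The only deviations are cosmetic: you test pointwise in time with $\pi_h\phi$ where the paper uses local time averages $\overline{w}$, and in the misfit bound you use the piecewise linear interpolant of $\pi_h u$ (hence your explicit, and correct, treatment of $\xi_h^0$) where the paper's piecewise constant projection $\pi_0$ avoids the $n=0$ term altogether.
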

\begin{proof}
Let $e = u - \tilde u_h$, and define the linear form
$$
\pair{r, w}
= \int_0^T (\p_t e, w) + a(e,w)\, dt, 
\quad w \in L^2(0,T; H_0^1(\Omega)).
$$
By Theorem \ref{th_cont_stable} it is enough to show 
the following two inequalities 
\begin{align}\label{e_omega}
\norm{e}_{L^2((0, T) \times \omega)}
&\le C h \norm{u}_*,
\\\label{r}
\pair{r, w} 
&\le C h 
\left( 
\norm{u}_* + \norm{f}_{L^2((0,T) \times \Omega)}
\right)
\norm{w}_{L^2(0,T; H_0^1(\Omega))}.
\end{align}

Let us begin with (\ref{e_omega}).
We define the projection on the piecewise constant functions
$$
\pi_0 v(t) = v(t^n), 
\quad t \in (t_{n-1},t_n],\ n = 1,\dots,N.
$$
Observe that 
$$
\norm{\pi_0 v - v}_{L^2(0,T)} \le \tau 
\norm{\p_t v}_{L^2(0,T)}, 
\quad v \in H^1(0,T).
$$
We have 
\begin{align*}
\norm{e}_{L^2((0, T) \times \omega)}^2
\le C (h^2 + \tau^2)\norm{u}_{H^1(0,T;H^1(\Omega))}^2
+ \int_0^T \norm{\pi_0 \pi_h u - \tilde u_h}_\omega^2 dt,
\end{align*}
and
\begin{align*}
\int_0^T \norm{\pi_0 \pi_h u - \tilde u_h}_\omega^2 dt
&\le 
\int_{0}^{T} \norm{\pi_0 \pi_h u - \pi_0 \tilde u_h}_\omega^2 dt
+ \int_0^T \norm{\pi_0 \tilde u_h - \tilde u_h}_\omega^2 dt
\\&= 
\tau \sum_{n=1}^N \norm{\pi_h u^n - u_h^n}_\omega^2
+ \sum_{n=1}^N  \int_{t_{n-1}}^{t_n} \norm{\pi_0 \tilde u_h - \tilde u_h}_\omega^2 dt.
\end{align*}
Here the first term is bounded by $\tnorm{\pi_h u - u_h}_R^2$,
and we use the identity 
\begin{align}\label{interp_id}
\tilde u_h 
= u^n_h + (t-t_{n}) \p_\tau u_h^n
\end{align}
to estimate the second one as follows
\begin{align*}
\sum_{n=1}^N  \int_{t_{n-1}}^{t_n} \norm{\pi_0 \tilde u_h - \tilde u_h}^2 dt
&=
\sum_{n=1}^N  \int_{t_{n-1}}^{t_n} \norm{(t_{n}-t) \p_\tau u_h^n}^2 dt
\le 
\tau \sum_{n=1}^N \norm{\tau \p_\tau u_h^n}^2
\\&\le 
\tau \sum_{n=1}^N \norm{\tau \p_\tau (\pi_h u^n - u_h^n)}^2
+ \tau \sum_{n=1}^N \norm{\tau \p_\tau \pi_h u^n}^2.
\end{align*}
As $\tau = \mathcal O(h)$, the first term above is bounded by $\tnorm{\pi_h u - u_h,0}_D^2$,
and the second term is bounded by $\tau^2 \norm{u}^2_*$.
The inequality (\ref{e_omega}) follows from Proposition \ref{prop_tnorm}.

We turn to (\ref{r}), and define the piecewise constant function
defined by local time averages
\def\w{\overline w}
$$
\w(t) = \tau^{-1} \int_{t_{n-1}}^{t_n} w\, dt,
\quad t \in (t_{n-1},t_n],\ n = 1,\dots,N.
$$
We have
\begin{align*}
\int_0^T (\p_t u, w) + a(u,w)\, dt
= \int_0^T (f, w)\, dt = 
\int_0^T (f - \pi_0 f, w)\, dt + \tau \sum_{n=1}^N (f^n, \w),
\end{align*}
and using the identity (\ref{interp_id}) and the orthogonality (\ref{pi_ortho}),
\begin{align*}
&-\int_0^T (\p_t \tilde u_h, w) + a(\tilde u_h, w)\, dt
=
-\tau \sum_{n=1}^N (\p_\tau u_h^n, \w)
-\int_0^T a(\tilde u_h, \pi_h w)\, dt
\\&\quad=
-\tau \sum_{n=1}^N (\p_\tau u_h^n, \w)
-\tau \sum_{n=1}^N a(u_h^n, \pi_h \w)
-\sum_{n=1}^N \int_{t_{n-1}}^{t_n} (t-t_n)\, a(\p_\tau u_h^n, \pi_h w)\, dt.
\end{align*}
As $u_h$ satisfies (\ref{normal}), it holds that
\begin{align*}
\pair{r,w}
&=
\int_0^T (f - \pi_0 f, w)\, dt + 
\tau \sum_{n=1}^N (f^n, \w - \pi_h \w)
-\tau \sum_{n=1}^N (\p_\tau u_h^n, \w- \pi_h \w)
\\&\quad
-\sum_{n=1}^N \int_{t_{n-1}}^{t_n} (t-t_n)\, a(\p_\tau u_h^n,\pi_h w)\, dt.
\end{align*}
We have
\begin{align*}
\int_0^T (f - \pi_0 f, w)\, dt
&\le \tau \norm{f}_{H^1(0,T;L^2(\Omega))} \norm{w}_{L^2((0,T) \times \Omega)},
\\
\tau \sum_{n=1}^N (f^n, \w - \pi_h \w)
&\le 
C h \norm{f}_{H^1(0,T;L^2(\Omega))}
\norm{w}_{L^2(0,T; H^1(\Omega))}.
\end{align*}
Moreover,
\begin{align*}
\tau \sum_{n=1}^N (\p_\tau u_h^n, \w- \pi_h \w)
\le C h 
\norm{u}_{H^2(0,T;L^2(\Omega))}
\norm{w}_{L^2(0,T; H^1(\Omega))},
\end{align*}
where we used Proposition \ref{prop_tnorm}, after observing that
\begin{align*}
h^2 \tau \sum_{n=1}^N \norm{\p_\tau u_h^n}^2 
\le \tnorm{u_h - \pi_h u,0}_D^2 + h^2 \norm{u}_*^2.
\end{align*}
Finally,
\begin{align*}
\sum_{n=1}^N \int_{t_{n-1}}^{t_n} (t-t_n)\, a(\p_\tau u_h^n,\pi_h w)\, dt
\le \tau \left(
\tau \sum_{n=1}^N \norm{\nabla \p_\tau u_h^n}^2 
\right)^{\frac 1 2} \norm{w}_{L^2(0,T; H^1(\Omega))},
\end{align*}
and using the triangle inequality and \eqref{eq:useful},
\begin{align*}
\tau \sum_{n=1}^N \norm{\tau \nabla \p_\tau u_h^n}^2
\le 
\tau \sum_{n=1}^N \norm{\tau \nabla \p_\tau (u_h^n - \pi_h u^n)}^2
+ 
C \tau^2 \int_0^T \norm{\nabla \p_t u}^2 dt.
\end{align*}
Observe that 
\begin{align*}
\tau \sum_{n=1}^N \norm{\tau \nabla \p_\tau (u_h^n - \pi_h u^n)}^2
\le
C 
\begin{cases}
\tnorm{u_h - \pi_h u}^2_R, & \gamma_1 > 0,
\\[3mm]
\tnorm{u_h - \pi_h u, 0}^2_D, & \tau = \mathcal O(h^2).
\end{cases}
\end{align*}
The inequality (\ref{r}) follows from Proposition \ref{prop_tnorm}.
\end{proof}
If $\gamma_1=0$ and $\tau = \mathcal O(h)$ then Theorem \ref{th_main}
does not predict optimal convergence. Indeed, in this case the bound
in the last step becomes
\[
\tau \sum_{n=1}^N \norm{\tau \nabla \p_\tau (u_h^n - \pi_h u^n)}^2
\leq C h^{-1} \tnorm{u_h - \pi_h u, 0}^2_D.
\]
This then leads to a convergence of order
$O(h^{\frac12}+\tau^{\frac12})$ using Proposition \ref{prop_tnorm}.
\subsection{The case of perturbations in data}
Thanks to the Lipschitz stability of Theorem \ref{th_cont_stable} the
extension of the above analysis to the case where the data is
perturbed is straightforward. Indeed assume that instead of
$(q^n,f^n)_{n=1}^N$ in \eqref{def_L} we have at are disposal the
perturbed data $(\tilde q^n,\tilde f^n)_{n=1}^N$, 
\[
\tilde q^n = q^n + \delta q^n, \quad \tilde f^n = f^n + \delta f^n
\]
with $\delta q^n \in L^2(\omega)$ and $\delta f^n \in
H^{-1}(\Omega)$. Then a standard perturbation argument leads to
similar results as Proposition \ref{prop_tnorm} and Theorem
\ref{th_main},
but with an additional term of the form 
$$C \tau^{\frac12}
\left(\sum_{n=1}^N \left( \|\delta q^n\|^2_\omega + \|\delta
    f^n\|^2_{H^{-1}(\Omega)} \right) \right)^{\frac12}$$ in the right
hand side of the bounds of the error estimates. This is a similar
result as one would obtain for a well-posed problem.
\section{Computational examples}
\label{sec_comp}
The main objectives of the computational examples are twofold. 
\begin{enumerate}
\item First we verify that the predicted reduction in
  convergence order to $O(h^{\frac12}+\tau^{\frac12})$ for $\gamma_1=0$ and $\tau = \mathcal O(h)$ indeed
  takes place, even in a simple model case.
\item Then we confirm that the situation is rectified for $\gamma_1>0$.
\end{enumerate}

The Euler-Lagrange equations (\ref{normal}) form a non-singular, symmetric system of $(2 N + 1) N_h$ linear equations. 
We emphasize that the system is not positive definite. In principle, it can be solved using off-the-shelf methods, for example the MINRES method \cite{Paige1975}. 

We implemented this straightforward strategy in the case that 
$\gamma_1 = 0$, and verified that the convergence order in space is
that predicted by Theorem \ref{th_main}. For the convergence order in
time we verify that failure to meet the condition $\tau  =
\mathcal{O}(h^2)$ indeed leads to suboptimal convergence. We observe
$\mathcal{O}(\tau^{\frac12})$ convergence under refinement of $\tau$
in the regime where $\tau  =
\mathcal{O}(h)$.
In all our computational examples $\Omega$ is the unit interval $(0,1)$, 
$\omega = (a, 1-a)$, $a = 0.2$,
and we use a regular mesh on $\Omega$.
Moreover, the function $u$ is of the form
\begin{align}\label{u_comp}
u(t,x) = e^{-\pi^2 k^2 t} \sin (\pi k x), \quad k = 1,2.
\end{align}
Computations 
for $k = 2$ and $T = 0.02$
are summarized in Table \ref{tab_monolithic}.
We also verified that the computations diverge when no regularization is introduced, that is, when $\gamma_0 = 0$.
In these computations we used the MINRES implementation of SciPy with the default parameters \cite{Jones2001--}, and the initial guess was set to zero. The convergence is typically slow, requiring thousands of iterations. 

\begin{table}\centering
\begin{tabular}{ l | c  c  c   }
$h$ & 
0.02 & 0.01 & 0.005
\\\hline
error &  
0.224 & 0.119 & 0.043
\end{tabular}
\qquad
\begin{tabular}{ l |  c  c  c   }
$\tau$ & 
0.004 & 0.002 & 0.001
\\\hline
error &  
0.104 & 0.073 & 0.048
\end{tabular}
\medskip
\caption{Convergence with $\gamma_M = \gamma_0 = 1$ and $\gamma_1=0$
using the MINRES method. 
The error is $\norm{u(T) - u_h^N}_{L^2(\Omega)}$.
{\em Left.} Order $1$ convergence in $h$ with $N=16$.
{\em Right.} Order $1/2$ convergence in $\tau$ with $N_h=200$.
}
\label{tab_monolithic}
\end{table}

The remaining examples will exploit the structure of (\ref{normal}) to reduce the computational burden.

\subsection{The Euler-Lagrange equations as a system of two coupled heat equations}
An attractive feature of the regularization in (\ref{def_L})
is that it acts only on the primal variable $u$.
This leads to the one-way coupling in (\ref{normal}),
that is, the dual variable $z$ does not appear in the equation involving $A_1$. 
We present next a method solving (\ref{normal}) that is based on the one-way coupling. 

Note that the first equation in (\ref{normal}), that is,
\begin{align}\label{heat_u}
\tau \sum_{n=1}^N \left( 
(\p_\tau u^n, w^n) + a(u^n, w^n)
\right) = \tau \sum_{n=1}^N (f^n, w^n),
\end{align}
is simply a discretization of the heat equation (\ref{heat}).
Let us next interpret the second equation in (\ref{normal})
as a discretization of a heat equation for $z$.
Observe that, setting $z^{N+1} = 0$, we obtain
\begin{align*}
\tau \sum_{n=1}^N (\p_\tau v^n, z^n)
= - \tau \sum_{n=1}^N (v^n, \p_\tau z^{n+1}) - (v^0, z^1).
\end{align*}
Thus choosing $v^0 = 0$ in (\ref{normal}) for the moment, we see that $z$ satisfies
\begin{align}\label{heat_z}
&\tau \sum_{n=1}^N 
\left(
- (v^n, \p_\tau z^{n+1}) + a(v^n, z^n) 
\right)
\\\notag&\quad=
\gamma_M \tau \sum_{n=1}^N (q^n - u^n, v^n)_\omega
- \gamma_1 \tau \sum_{n=1}^N (\tau \nabla \p_\tau u^n, \tau \nabla \p_\tau v^n),
\end{align}
and this can be interpreted as a discretization of 
\begin{align*}
-\p_t z - \Delta z = \gamma_M (q- u) 1_\omega.
\end{align*}
Here $1_\omega$ is the indicator function of $\omega$, that is, $1_\omega(x) = 1$ if $x \in \omega$ and $1_\omega(x) = 0$ otherwise.
Note that, when rescaled by $\tau^{-2}$, the second term on the right-hand side of (\ref{heat_z}) is a discretization of 
$\int_0^T (\nabla \p_t u, \nabla \p_t v)\, dt$. 
Taking now $v^n = 0$, $n=1,\dots,N$, in (\ref{normal})
we get the additional constraint
\begin{align*}
\gamma_0 (h \nabla u^0, h \nabla v^0)
- \gamma_1 \tau (\tau \nabla \p_\tau u^1, \nabla v^0)
- (z^1, v^0) = 0.
\end{align*}

Define $U(\phi)$ to be the solution of (\ref{heat_u}) 
with $u^0 = \phi$, and $Z(\phi)$ the solution of (\ref{heat_z})
with $z^{N+1} = 0$ and $u = U(\phi)$.
Observe that these can be easily computed by using time stepping.
Furthermore, define the function
$$
\mathcal C(\phi, \psi) = 
\gamma_0 (h \nabla U^0(\phi), h \nabla \psi)
- \gamma_1 \tau (\tau \nabla \p_\tau U^1(\phi), \nabla \psi)
- (Z^1(\phi), \psi), \quad \psi \in V_h.
$$
Then $(u,z) = (U(\phi), Z(\phi))$ solves 
(\ref{normal}) if and only if 
\begin{align}\label{coupling}
\mathcal C(\phi, \psi) = 0, \quad \psi \in V_h.
\end{align}

We will use a gradient descent type method to solve (\ref{coupling}). 
Starting from an initial guess $\phi_0 \in V_h$, we define the iteration
\begin{align}\label{graddesc}
(\phi_{m+1}, \psi) = 
(\phi_m, \psi)
-\alpha \mathcal C(\phi_m, \psi), \quad \psi \in V_h,
\end{align}
where $\alpha > 0$ is a step size.
The system (\ref{graddesc}) is a discretization of 
the differential equation
\begin{align}\label{def_Phi}
\Phi(0) = \phi_0, \quad (\p_s \Phi(s), \psi) = 
-\mathcal C(\Phi(s), \psi), \quad \psi \in V_h,
\end{align}
and its use to solve (\ref{coupling}) is justified by the following lemma.

\begin{lemma}
Let $\phi_0 \in V_h$ and define a one parameter family $\Phi(s)$, $s \ge 0$, in $V_h$ by (\ref{def_Phi}).
Let $(u_h, z_h)$ be the solution of (\ref{normal}).
Then $\Phi(s)$ converges to $u_h^0$ as $s \to \infty$.
\end{lemma}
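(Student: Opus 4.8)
The plan is to exploit that the maps $\phi \mapsto U(\phi)$ and $\phi \mapsto Z(\phi)$ are affine, so that $\mathcal C$ is affine in its first argument and linear in its second. First I would write, via the Riesz representation in $(V_h, (\cdot,\cdot))$,
$$
\mathcal C(\phi, \psi) = (\mathcal A \phi - b, \psi), \quad \psi \in V_h,
$$
for a linear operator $\mathcal A : V_h \to V_h$ and a vector $b \in V_h$ encoding the data $(f^n, q^n)$. Then (\ref{def_Phi}) reads $\p_s \Phi = -(\mathcal A \Phi - b)$, a linear constant-coefficient ODE on the finite-dimensional space $V_h$, whose solution is $\Phi(s) = \phi^* + e^{-s \mathcal A}(\phi_0 - \phi^*)$ as soon as $\mathcal A$ is invertible, where $\phi^* := \mathcal A^{-1} b$. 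Since $(u_h, z_h) = (U(u_h^0), Z(u_h^0))$ solves (\ref{normal}), the equivalence (\ref{coupling}) gives $\mathcal C(u_h^0, \cdot) = 0$, i.e.\ $\mathcal A u_h^0 = b$, so $\phi^* = u_h^0$. Thus everything reduces to showing $e^{-s\mathcal A} \to 0$ as $s \to \infty$, for which it suffices that every eigenvalue of $\mathcal A$ has positive real part.

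The heart of the argument is to show that the symmetric part of $\mathcal A$ is positive definite, i.e.\ $(\mathcal A \phi, \phi) > 0$ for $\phi \neq 0$; this simultaneously yields invertibility of $\mathcal A$ and forces $\operatorname{Re}\lambda > 0$ for each eigenvalue $\lambda$. Writing $\mathcal C_0$, $U_0$, $Z_0$ for the homogeneous versions ($f = 0$, $q = 0$) of $\mathcal C$, $U$, $Z$, one has $(\mathcal A \phi, \phi) = \mathcal C_0(\phi, \phi)$. I would compute this by recognizing the bookkeeping that produced (\ref{heat_z}) and the constraint (\ref{coupling}): comparing definitions shows
$$
\mathcal C_0(\phi, \psi) = A_2\bigl((u, z), \hat v_\psi\bigr), \quad u = U_0(\phi),\ z = Z_0(\phi),
$$
where $\hat v_\psi \in V_h^{N+1}$ is the test function with $\hat v_\psi^0 = \psi$ and $\hat v_\psi^n = 0$ for $n \ge 1$, the $\gamma_0$-, $\gamma_1$- and $-(z^1,\psi)$-terms being exactly the contributions of $\hat v_\psi$ to the three relevant pieces of $A_2$.

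Next I would take $\psi = \phi = u^0$ and decompose $u = \hat v_{u^0} + \check u$ with $\check u^0 = 0$. Because $z = Z_0(\phi)$ solves the homogeneous adjoint equation (\ref{heat_z}), we have $A_2((u,z), v) = 0$ for every $v$ with $v^0 = 0$, so $A_2((u,z), \check u) = 0$ and hence $\mathcal C_0(\phi, \phi) = A_2((u,z), u)$. Evaluating $A_2((u,z), u)$ directly, its first three terms equal $\tnorm{u}_R^2$ and its last term is $A_1(u, z)$, which vanishes since $u = U_0(\phi)$ solves the homogeneous equation (\ref{heat_u}). Therefore
$$
(\mathcal A \phi, \phi) = \mathcal C_0(\phi, \phi) = \tnorm{U_0(\phi)}_R^2 \ge \gamma_0 \norm{h \nabla \phi}^2,
$$
which is strictly positive for $\phi \neq 0$, because $\gamma_0 > 0$ and $\nabla \phi = 0$ forces $\phi = 0$ on $H^1_0(\Omega)$ by the Poincar\'e inequality. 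This gives the claim.

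The main obstacle is the middle identification $\mathcal C_0(\phi, \psi) = A_2((u,z), \hat v_\psi)$ together with the subsequent cancellation $A_2((u,z), \check u) = 0$: these require carefully retracing the summation-by-parts step $\tau\sum_n (\p_\tau v^n, z^n) = -\tau \sum_n (v^n, \p_\tau z^{n+1}) - (v^0, z^1)$ (with $z^{N+1} = 0$) and the analogous rearrangement of the $\gamma_1$-term used to derive (\ref{heat_z}), and then checking that the only surviving contributions of the test function $\hat v_\psi$ are precisely the three terms defining $\mathcal C$. Once this bookkeeping is in place, the reduction to $\tnorm{U_0(\phi)}_R^2$ and the convergence of the linear flow are routine.
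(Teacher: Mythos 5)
Your proof is correct, but it takes a genuinely different route from the paper. The paper argues via a Lyapunov function: along the flow $(u(s),z(s))=(U(\Phi(s)),Z(\Phi(s)))$ one has $\p_s \mathcal L(u,z) = -\norm{\p_s\Phi}^2$, and since $\mathcal L$ is non-negative and decreasing, $\p_s\Phi \to 0$; the limit $\phi_\infty$ then exists, satisfies (\ref{coupling}), and is identified with $u_h^0$ by the equivalence established before the lemma. You instead exploit linearity head-on: $\mathcal C(\phi,\psi)=(\mathcal A\phi - b,\psi)$ turns (\ref{def_Phi}) into a constant-coefficient linear ODE, and the whole lemma reduces to showing that the symmetric part of $\mathcal A$ is positive definite. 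Your key energy identity $\mathcal C_0(\phi,\phi)=A_2((u,z),u)=\tnorm{U_0(\phi)}_R^2 \ge \gamma_0\norm{h\nabla\phi}^2$ is correct --- the bookkeeping checks out: $\mathcal C_0(\phi,\psi)=A_2((u,z),\hat v_\psi)$ by exactly the summation-by-parts used to derive (\ref{heat_z}); $A_2((u,z),\check u)=0$ because $Z_0(\phi)$ satisfies the homogeneous version of the second equation of (\ref{normal}) for all test functions vanishing at $n=0$; and $A_1(u,z)=0$ since $u=U_0(\phi)$ solves the homogeneous (\ref{heat_u}). (This identity is in essence the paper's own observation $\tnorm{u}_R^2 = A_1(u,-z)+A_2((u,z),u)$ from the proof of Proposition \ref{prop_coer}, redeployed on the reduced operator.) Each approach buys something: the paper's argument is shorter and conceptual (gradient descent on the reduced cost), but its step from $\p_s\Phi \to 0$ to existence of $\lim_{s\to\infty}\Phi(s)$ is terse and in general would need exactly the linear structure you make explicit; your argument is more self-contained on this point and is quantitative, yielding exponential convergence $\Phi(s) = u_h^0 + e^{-s\mathcal A}(\phi_0 - u_h^0)$ with rate governed by the smallest real part of the spectrum of $\mathcal A$, and it re-derives uniqueness of the fixed point for free. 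Note also that you only use $\gamma_0 > 0$ from (\ref{gamma_pos}), matching the lemma's hypotheses (no $\gamma_1>0$ needed), and your appeal to the standard fact that a real matrix with positive definite symmetric part has spectrum in the open right half-plane is sound.
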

\begin{proof}
For each $s \ge 0$ 
it holds by definition that $u(s) = U(\Phi(s))$
and $z(s) = Z(\Phi(s))$ satisfy (\ref{heat_u})
and (\ref{heat_z}), respectively. Hence
\begin{align*}
\p_s \mathcal L(u, z) 
= 
(\p_u L, \p_s u) + (\p_z L, \p_s z) 
=
\mathcal C(\Phi, \p_s u^0)
= \mathcal C(\Phi, \p_s \Phi)
= -\norm{\p_s \Phi}^2.
\end{align*}
The equation (\ref{heat_u}) implies also that 
$$
\mathcal L(u,z) 
= 
\frac 1 2 \gamma_M \tau \sum_{n=1}^N \norm{u^n - q^n}_\omega^2
+ \frac 1 2 \gamma_0 \norm{h \nabla \Phi}^2
+ \frac 1 2 \gamma_1 \tau \sum_{n=1}^N \norm{\tau \nabla \p_\tau u^n}^2.
$$
As $\mathcal L$ is non-negative and decreasing 
along the family $(u(s), z(s))$, it follows that
$\p_s \mathcal L(u,z) \to 0$ as $s \to \infty$.
Hence also $\p_s \Phi \to 0$ as $s \to \infty$,
and the differential equation (\ref{def_Phi})
implies that the limit $\phi_\infty = \lim_{s \to \infty} \Phi(s)$ exists and satisfies (\ref{coupling}).
By the discussion preceding the proof, we have $\phi_\infty = u_h^0$.
\end{proof}

We will use the above gradient descent method
in the computational examples below and assume that 
the initial guess $\phi_0$ is a small perturbation of $u(0)$.
Such an assumption can be relevant for many data assimilation applications. Indeed, it is typical that new observations need to be incorporated into the state of the system, and the current state can then be used as an initial guess. 

\subsection{The effect of regularization on the convergence in $\tau$}

\begin{figure}
	\centering
	\includegraphics[width=0.75\linewidth]{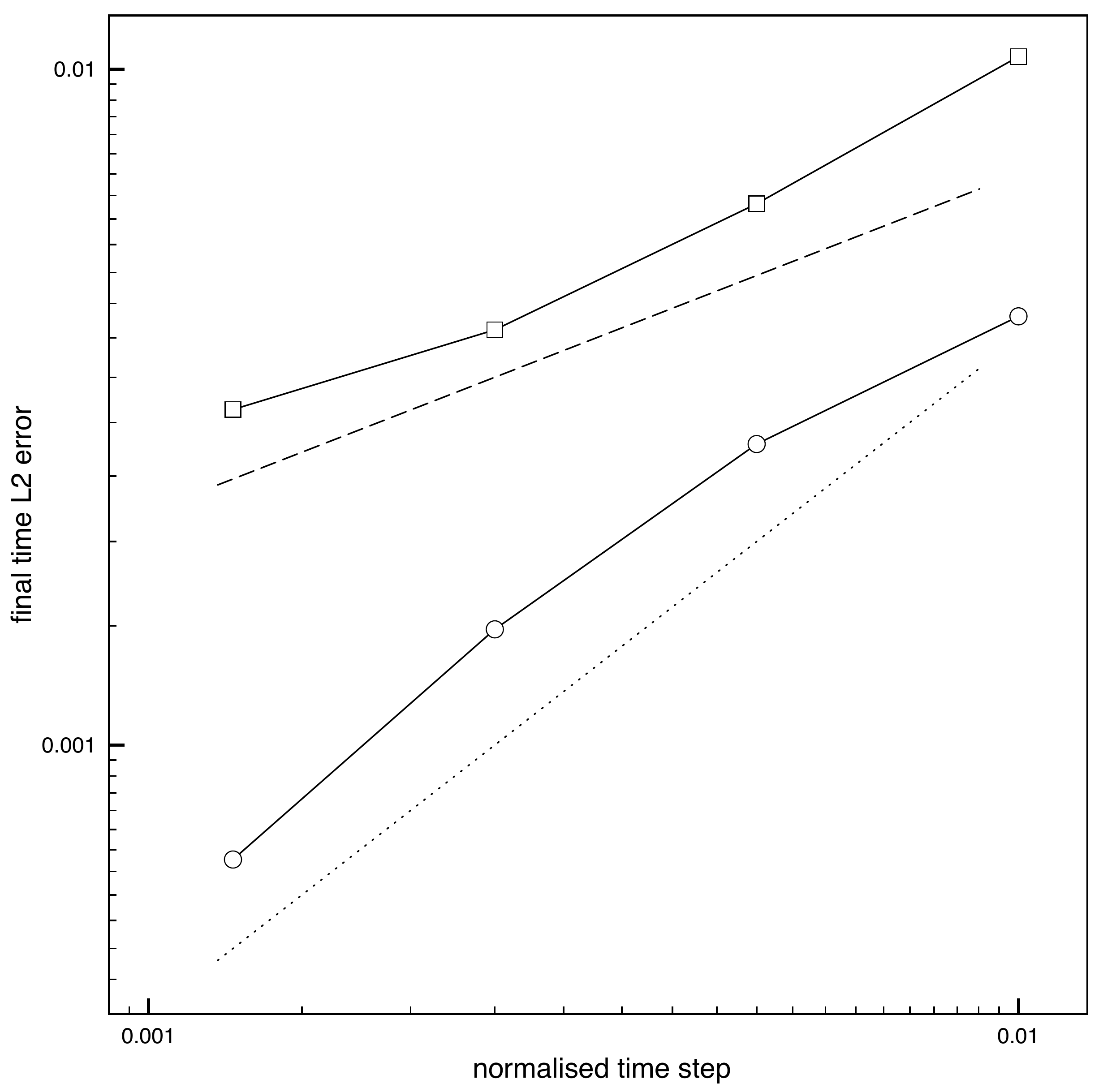}
\caption{ 
The effect of regularization on the convergence in $\tau$.
The convergence is of order $1/2$ (slope of dashed reference line) when
$\gamma_1 = 0$ (data with square markers) 
and of order $1$ (slope of dotted reference line) when $\gamma_1 = 1$ (data with circle markers) .
Here $\gamma_M = \gamma_0 = 1$, $h = 10^{-2}$,
and the error is $\norm{u(T) - u_h^N}_{L^2(\Omega)}$.
}
\label{fig:fullsolverconvergencefinal}
\end{figure}

\begin{figure}
	\centering
	\includegraphics[width=0.95\linewidth]{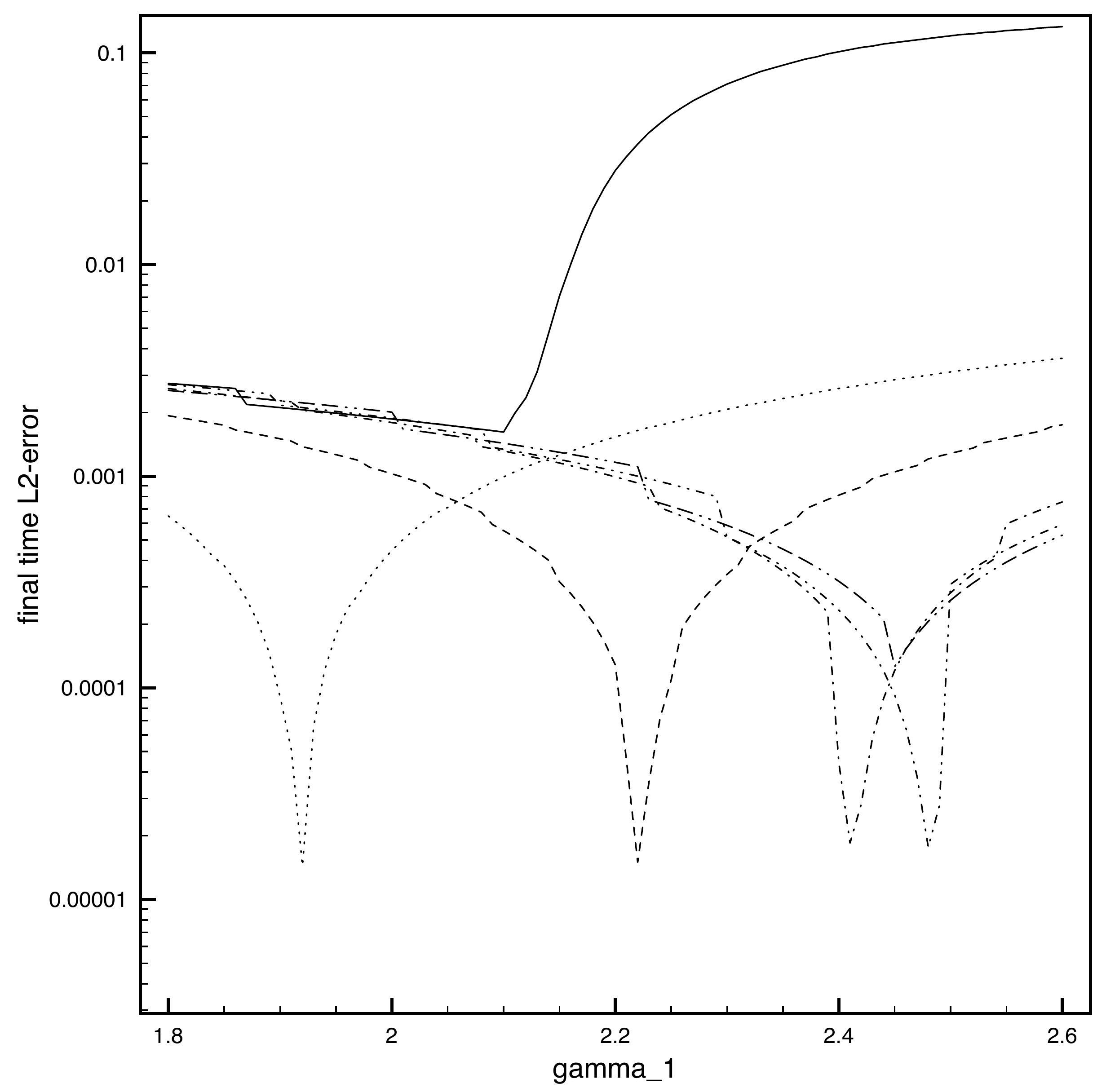}
\caption{The error for various choices of the constants $\gamma_0, \gamma_1$. Here $\gamma_M=1$, $h=\tau=10^{-2}$ and the error is $\norm{u(T) - u_h^N}_{L^2(\Omega)}$.
For each $0.1 \le \gamma_0 \le 1.2$, 
the method is robust for a large range
in $\gamma_1$. There also is an optimal value of $\gamma_1$ for each
such $\gamma_0$. However, this is mesh dependent and it is not clear
if the phenomenon can be exploited in practice. ($\gamma_0 = 0.1$ -
dotted line; $\gamma_0 = 0.2$ -
dashed line; $\gamma_0 = 0.6$ -
dash/dotted line; $\gamma_0 = 1.0$ -
dash/doubledotted line; $\gamma_0 = 1.2$ -
doubledash/doubledotted line; $\gamma_0 = 1.5$ -
filled line.)}
\label{fig:parameterstudyfinalgraph}
\end{figure}

We verified that the presence of the additional regularization in the case $\gamma_1 > 0$ leads to the improved convergence rate in $\tau$ as predicted by Theorem \ref{th_main}.
Indeed, in the computations summarized in Figure \ref{fig:fullsolverconvergencefinal}, 
the convergence is of order $1/2$ when $\gamma_1 = 0$
and of order $1$ when $\gamma_1 = 1$.
Here $\gamma_M = \gamma_0 = 1$, $h = 10^{-2}$,
$u$ is of the form (\ref{u_comp}) with $k=1$, and $T=0.1$.
We used the gradient descent method 
with the initial guess $\phi_0 = v + h$ where $v$ is the interpolation of $u(0)$ on $V_h$. The step size in (\ref{graddesc}) was taken $ \alpha = 0.1$ and the iteration (\ref{graddesc}) was terminated when $\norm{z^1}$ started to increase. 

\subsection{Sensitivity to the choice of $\gamma_0$ and $\gamma_1$.}
In all the numerical experiments above we have taken the parameters $\gamma_0$ and $\gamma_1$ to be either one or zero. This was to avoid special effects that can
appear due to parameter tuning. In a final numerical experiment we
verified that the method is not sensitive to the
particular choices of the constants $\gamma_0, \gamma_1 > 0$. The
conclusion of the study is that the method is robust for a wide range
of choices of $\gamma_0$ and $\gamma_1$, including
$\gamma_0=\gamma_1=1$. We observed that choosing both parameters large
resulted in solutions that were over regularized and yielded suboptimal
accuracy compared to lower values of the parameters. See the filled
line of Figure \ref{fig:parameterstudyfinalgraph} for an example.
We also observed that there are certain ``sweet spot'' combinations of values of $\gamma_0$ and $\gamma_1$
for which the errors are orders of magnitude smaller than for the
neighbouring parameter combinations. These optimal parameter
combinations however did not appear to be stable under mesh refinement
and it is unclear if this effect can be of any use in practice.
The computations are summarized in Figure
\ref{fig:parameterstudyfinalgraph}, with particular focus on the
parameter interval where the optimal parameter choices appeared.
Here $h = \tau = 10^{-2}$ and the other choices are as in the previous example. 



\bibliographystyle{abbrv}
\bibliography{main}

\begin{thebibliography}{10}

\bibitem{QJ:QJ340}
R.~N. Bannister.
\newblock A review of forecast error covariance statistics in atmospheric
  variational data assimilation. {II}: Modelling the forecast error covariance
  statistics.
\newblock {\em Quarterly Journal of the Royal Meteorological Society},
  134(637):1971--1996, 2008.

\bibitem{Becache2015}
E.~B\'ecache, L.~Bourgeois, L.~Franceschini, and J.~Dard\'e.
\newblock Application of mixed formulations of quasi-reversibility to solve
  ill-posed problems for heat and wave equations: the 1{D} case.
\newblock {\em Inverse Probl. Imaging}, 9(4):971--1002, 2015.

\bibitem{BHR11}
F.~Boyer, F.~Hubert, and J.~Le~Rousseau.
\newblock Uniform controllability properties for space/time-discretized
  parabolic equations.
\newblock {\em Numer. Math.}, 118(4):601--661, 2011.

\bibitem{Bu13}
E.~Burman.
\newblock Stabilized finite element methods for nonsymmetric, noncoercive, and
  ill-posed problems. {P}art {I}: {E}lliptic equations.
\newblock {\em SIAM J. Sci. Comput.}, 35(6):A2752--A2780, 2013.

\bibitem{Bu14}
E.~Burman.
\newblock Error estimates for stabilized finite element methods applied to
  ill-posed problems.
\newblock {\em C. R. Math. Acad. Sci. Paris}, 352(7-8):655--659, 2014.

\bibitem{BHL16}
E.~Burman, M.~G. Larson, and P.~Hansbo.
\newblock Solving ill-posed control problems by stabilized finite element
  methods: an alternative to tikhonov regularization.
\newblock Technical report, arXiv, 2016.

\bibitem{BO16}
E.~{Burman} and L.~{Oksanen}.
\newblock {Data assimilation for the heat equation using stabilized finite
  element methods}.
\newblock {\em Preprint arXiv:1609.05107}, Sept. 2016.

\bibitem{Castro2014}
C.~Castro, N.~C{\^\i}ndea, and A.~M\"unch.
\newblock Controllability of the linear one-dimensional wave equation with
  inner moving forces.
\newblock {\em SIAM J. Control Optim.}, 52(6):4027--4056, 2014.

\bibitem{Cindea2013}
N.~C{\^\i}ndea, E.~Fern\'andez-Cara, and A.~M\"unch.
\newblock Numerical controllability of the wave equation through primal methods
  and {C}arleman estimates.
\newblock {\em ESAIM Control Optim. Calc. Var.}, 19(4):1076--1108, 2013.

\bibitem{Cindea2015a}
N.~C{\^\i}ndea and A.~M\"unch.
\newblock Inverse problems for linear hyperbolic equations using mixed
  formulations.
\newblock {\em Inverse Problems}, 31(7):075001, 38, 2015.

\bibitem{Cindea2015}
N.~C{\^\i}ndea and A.~M\"unch.
\newblock A mixed formulation for the direct approximation of the control of
  minimal {$L^2$}-norm for linear type wave equations.
\newblock {\em Calcolo}, 52(3):245--288, 2015.

\bibitem{Cindea2016}
N.~C{\^\i}ndea and A.~M\"unch.
\newblock Simultaneous reconstruction of the solution and the source of
  hyperbolic equations from boundary measurements: a robust numerical approach.
\newblock {\em Inverse Problems}, 32(11):115020, 36, 2016.

\bibitem{QJ:QJ49712051912}
P.~Courtier et~al.
\newblock A strategy for operational implementation of 4{D}-{V}ar, using an
  incremental approach.
\newblock {\em Quarterly Journal of the Royal Meteorological Society},
  120(519):1367--1387, 1994.

\bibitem{Emanuilov1995}
O.~Y. {\`E}manuilov.
\newblock Controllability of parabolic equations.
\newblock {\em Mat. Sb.}, 186(6):109--132, 1995.

\bibitem{Ern2004}
A.~Ern and J.-L. Guermond.
\newblock {\em Theory and practice of finite elements}, volume 159 of {\em
  Applied Mathematical Sciences}.
\newblock Springer-Verlag, New York, 2004.

\bibitem{Isakov2006}
V.~Isakov.
\newblock {\em Inverse problems for partial differential equations}, volume 127
  of {\em Applied Mathematical Sciences}.
\newblock Springer, New York, second edition, 2006.

\bibitem{Jones2001--}
E.~Jones, T.~Oliphant, P.~Peterson, et~al.
\newblock {SciPy}: Open source scientific tools for {Python}, 2001--.
\newblock Online; version 0.19.1, accessed 19/07/17.

\bibitem{Klibanov2006}
M.~V. Klibanov.
\newblock Estimates of initial conditions of parabolic equations and
  inequalities via lateral {C}auchy data.
\newblock {\em Inverse Problems}, 22(2):495--514, 2006.

\bibitem{Klibanov2013}
M.~V. Klibanov.
\newblock Carleman estimates for global uniqueness, stability and numerical
  methods for coefficient inverse problems.
\newblock {\em J. Inverse Ill-Posed Probl.}, 21(4):477--560, 2013.

\bibitem{Klibanov1990}
M.~V. Klibanov and P.~G. Danilaev.
\newblock Solution of coefficient inverse problems by the method of
  quasi-reversibility.
\newblock {\em Dokl. Akad. Nauk SSSR}, 310(3):528--532, 1990.

\bibitem{Lattes1967}
R.~Latt\`es and J.-L. Lions.
\newblock {\em M\'ethode de quasi-r\'eversibilit\'e et applications}.
\newblock Travaux et Recherches Math\'ematiques, No. 15. Dunod, Paris, 1967.

\bibitem{dimet1986variational}
F.~X. Le~Dimet and O.~Talagrand.
\newblock Variational algorithms for analysis and assimilation of
  meteorological observations: theoretical aspects.
\newblock {\em Tellus A}, 38(2):97--110, 1986.

\bibitem{Lions61}
J.-L. Lions.
\newblock {\em \'Equations diff\'erentielles op\'erationnelles et probl\`emes
  aux limites}.
\newblock Die Grundlehren der mathematischen Wissenschaften, Bd. 111.
  Springer-Verlag, Berlin-G\"ottingen-Heidelberg, 1961.

\bibitem{Muench2016a}
A.~M{\"u}nch and D.~A. Souza.
\newblock Inverse problems for linear parabolic equations using mixed
  formulations - {P}art 1: {T}heoretical analysis.
\newblock {\em Journal of Inverse and Ill-posed Problems (to appear)}, 2016.

\bibitem{Muench2016b}
A.~M{\"u}nch and D.~A. Souza.
\newblock Inverse problems for linear parabolic equations using mixed
  formulations - {P}art 2: {N}umerical analysis.
\newblock {\em In preparation}, 2016.

\bibitem{Muench2016}
A.~M{\"u}nch and D.~A. Souza.
\newblock A mixed formulation for the direct approximation of {$L^2$}-weighted
  controls for the linear heat equation.
\newblock {\em Advances in Computational Mathematics}, 42(1):85--125, Feb 2016.

\bibitem{Paige1975}
C.~C. Paige and M.~A. Saunders.
\newblock Solutions of sparse indefinite systems of linear equations.
\newblock {\em SIAM J. Numer. Anal.}, 12(4):617--629, 1975.

\bibitem{Tadi2002}
M.~Tadi, M.~V. Klibanov, and W.~Cai.
\newblock An inversion method for parabolic equations based on
  quasireversibility.
\newblock {\em Comput. Math. Appl.}, 43(8-9):927--941, 2002.

\bibitem{Thom97}
V.~Thom\'ee.
\newblock {\em Galerkin finite element methods for parabolic problems},
  volume~25 of {\em Springer Series in Computational Mathematics}.
\newblock Springer-Verlag, Berlin, 1997.

\bibitem{Wang2010}
Y.~B. Wang, J.~Cheng, J.~Nakagawa, and M.~Yamamoto.
\newblock A numerical method for solving the inverse heat conduction problem
  without initial value.
\newblock {\em Inverse Probl. Sci. Eng.}, 18(5):655--671, 2010.

\bibitem{Yamamoto2009}
M.~Yamamoto.
\newblock Carleman estimates for parabolic equations and applications.
\newblock {\em Inverse Problems}, 25(12):123013, 75, 2009.

\end{thebibliography}


\ifoptionfinal{}{
\listoftodos
}
\end{document}